\documentclass[11pt,a4paper]{article}
\usepackage{amsmath,amsfonts,amssymb,amsthm}
\usepackage{bbm,mathrsfs,bm,mathtools}

\title{Another look at threshold phenomena\\ for random cones}

\author{Daniel Hug and Rolf Schneider}

\date{}
\sloppy
\jot3mm

\oddsidemargin 0.2cm
\evensidemargin 0.2cm
\topmargin 0.4cm
\headheight0cm
\headsep0cm
\textheight23.5cm
\topskip2ex
\textwidth15.5cm
\parskip1ex plus0.5ex minus0.5ex

\newcommand{\R}{{\mathbb R}}
\newcommand{\bP}{{\mathbb P}}

\newcommand{\N}{{\mathbb N}}

\newcommand{\bE}{{\mathbb E}\,}

  \renewcommand{\exp}{{\rm exp}\,}

  \newcommand{\D}{{\rm d}}

\newtheorem{theorem}{Theorem}
\newtheorem{proposition}{Proposition}

\begin{document}
\maketitle

\begin{abstract}
In stochastic geometry there are several instances of threshold phenomena in high dimensions: the behavior of a limit of some expectation changes abruptly when some parameter passes through a critical value. This note continues the investigation of the expected face numbers of polyhedral random cones, when the dimension of the ambient space increases to infinity. In the focus are the critical values of the observed threshold phenomena, as well as threshold phenomena for differences instead of quotients. \\[1mm]
{\em Keywords:}  Cover--Efron cone, face numbers, high dimensions, threshold phenomenon\\[1mm]
2020 Mathematics Subject Classification: Primary: 52A22, 60D05. Secondary: 52A55,  52A23
\end{abstract}

\section{Introduction}\label{sec1}

In stochastic geometry, it is known that several expectations of geometric functionals exhibit a threshold phenomenon when the dimension tends to infinity. This note collects some observations connected to threshold phenomena for random convex cones.

Let $\phi$ be a probability measure on $\R^d$ which is even and assigns measure zero to each hyperplane through the origin $o$. Let $X_1,\dots,X_N$ with $N\ge d+1$ be stochastically independent random vectors with distribution $\phi$.
The Donoho--Tanner random cone is defined by $D_N:= {\rm pos}\{X_1,\dots,X_N\}$, where pos denotes the positive hull.
For $k\in\{1,\dots,d-1\}$ and any polyhedral convex cone $C\subset\R^d$ we denote by $f_k(C)$ the number of $k$-dimensional faces of $C$. Thus, $f_k(D_N)\le\binom{N}{k}$, and the equality holds if and only if $D_N$ is $k$-neighborly, that is, the positive hull of any $k$ of its generating vectors $X_1,\dots,X_N$ is a $k$-face of $D_N$. Now we assume that the dimension $d$ increases to infinity, while $N=N(d)$ and $k=k(d)$ grow with the dimension. (In the following, the dependence on $d$ will not always be shown by the notation.) We ask for the asymptotic behavior of the quotient $\bE f_k(D_N)/\binom{N}{k}$, where $\bE$ denotes the expectation.

Let $0<\delta<1$ and $0\le\rho<1$ be given. Let $k<d<N$ be integers (depending on $d$) such that
\begin{equation}\label{1A.1}
\frac{d}{N} \to\delta, \qquad \frac{k}{d}\to \rho \qquad \mbox{as }d\to\infty.\end{equation}
Define $\rho_W(\delta):= \max\{0,2-\delta^{-1}\}$ for $0<\delta<1$. Then
\begin{equation}\label{1A.2}
\lim_{d\to\infty} \frac{\bE f_k(D_N)}{\binom{N}{k}} =\left\{ \begin{array}{ll} 1 & \mbox{if }\rho<\rho_W(\delta),\\ 0 & \mbox{if }\rho >\rho_W(\delta).\end{array}\right.\end{equation}
This was proved by Donoho and Tanner \cite{DT10}. (In their terminology, $D_N={\sf A}\R^N_+$, where ${\sf A}$ is the random matrix with columns $X_1,\dots,X_N$ and $\R^N_+$ is the nonnegative orthant of $\R^N$.)

A similar result holds for the Cover--Efron cones, which are random cones with a different distribution, introduced earlier (see Cover and Efron \cite{CE67}). With random vectors $X_1,\dots,X_N$ as above, let $C_N$ be the random cone defined as ${\rm pos}\{X_1,\dots,X_N\}$, under the condition that this positive hull is different from $\R^d$. The following  was proved by Hug and Schneider \cite{HS20}. If (\ref{1A.1}) holds, then
\begin{equation}\label{1A.3}
\lim_{d\to\infty} \frac{\bE f_k(C_N)}{\binom{N}{k}} =\left\{ \begin{array}{ll} 1 & \mbox{if }\rho<\rho_W(\delta),\\ 0 & \mbox{if }\rho >\rho_W(\delta).\end{array}\right.
\end{equation}
The similarity to (\ref{1A.2}) is partially plausible, due to the fact that the probability of the event $\{{\rm pos}\{X_1,\dots,X_N\}\not=\R^d\}$ becomes exponentially small as $d\to \infty$, but only partially (the proof is more complicated).

If the number $k$ remains fixed, then we no longer have such a sharp threshold as above. The following was also proved in \cite{HS20}. If $\frac{d}{N} \to\delta \quad\mbox{as }d\to\infty$ with a number $\delta\in[0,1]$ and if $k\in\N$ is fixed, then
\begin{equation}\label{1A.4}
\lim_{d\to\infty} \frac{\bE f_k(C_N)}{\binom{N}{k}} = \left\{\begin{array}{ll} 1 & \mbox{if } 1/2<\delta\le 1,\\[1mm]
(2\delta)^k & \mbox{if } 0\le \delta< 1/2.\end{array}\right.
\end{equation}

In (\ref{1A.2}) and (\ref{1A.3}), the asymptotic behavior changes abruptly when a certain threshold is passed. In (\ref{1A.4}), a change (from the constant $1$ to a limit depending on $\delta$) is still visible.

The question arises (of interest from a theoretical point of view, though irrelevant for applications as in \cite{DT10} or \cite{ALMT14}) what precisely happens at the thresholds. The first part of the following is concerned with this question. It turns out that here a monotonicity property of the quotient $\bE f_k(C_N)/\binom{N}{k}$ as a function of $N$ is useful, and this will be proved in Theorem \ref{T3A.1}. This monotonicity may be of independent interest. Here it allows us to give an elementary proof that the limit in (\ref{1A.4}) is equal to $1$ also for $\delta=\frac{1}{2}$. Curiously, none of the proofs given in \cite{HS20} for $\delta<1/2$ or $\delta>1/2$ extends to include the case $\delta=1/2$, so a different argument is required. A proof using deeper deviation results from probability theory was given by Godland, Kabluchko and Th\"ale \cite[Theorem 4.1]{GKT20}.

Using the monotonicity proved in Theorem \ref{T3A.1}, we can also supplement the asymptotic relations (\ref{1A.3}) for the Cover--Efron cones by a description of the case $\rho=\rho_W(\delta)$; see Theorem \ref{T5A.1}. For the Donoho--Tanner cones, corresponding results are an immediate consequence of Propositions 1 and 2 in Section \ref{sec5}, concerning the so-called Wendel probabilities.

Whereas (\ref{1A.2}) deals with the quotient $\bE f_k(D_N)/\binom{N}{k}$, also the difference $\binom{N}{k}-\bE f_k(D_N)$ exhibits a threshold phenomenon, though with a considerably smaller critical value. This will be treated in Section \ref{sec6}, also for the random cones $C_N$.

\section{Preliminaries}\label{sec2}

The expectations $\bE f_k(D_N)$, $\bE f_k(C_N)$ can be expressed in terms of the Wendel probabilities, which will be defined next. Let $X_1,\dots,X_N$ be as above. We define the probability
$$ P_{d,N} := \bP\left(o\notin {\rm conv}\{X_1,\dots,X_N\}\right)= \bP\left({\rm pos}\{X_1,\dots,X_N\}\not=\R^d\right),$$
where $o$ denotes the origin of $\R^d$. The second equality follows from the fact that almost surely $X_1,\dots,X_N$ are in general position and span $\R^d$ linearly. We call the numbers $P_{d,N}$ the {\em Wendel probabilities}, since they were first explicitly determined by Wendel \cite{Wen62} (the proof is reproduced in \cite[8.2.1]{SW08}). The result is that
\begin{equation}\label{2A.0}
P_{d,N}= \frac{C(N,d)}{2^N},\qquad C(N,d) := 2\sum_{i=0}^{d-1}\binom{N-1}{i}.
\end{equation}
Thus, if $\xi_N$ denotes a random variable which has the binomial distribution with parameters $N$ and $1/2$, so that
$$ \bP(\xi_N=k)=\frac{1}{2^N}\binom{N}{k} \quad \mbox{and} \quad \bP(\xi_N\le k) =\frac{1}{2^N}\sum_{i=0}^k\binom{N}{i},$$
then
\begin{equation}\label{2A.1}
P_{d,N} =\bP(\xi_{N-1}\le d-1).
\end{equation}

We have
\begin{equation}\label{2A.2}
\frac{\bE f_k(D_N)}{\binom{N}{k}} = P_{d-k,N-k},
\end{equation}
as shown by Donoho and Tanner \cite[Thm. 1.6]{DT10},
and
\begin{equation}\label{2A.3}
\frac{\bE f_k(C_N)}{\binom{N}{k}} = \frac{P_{d-k,N-k}}{P_{d,N}};
\end{equation}
see \cite[(3.3)]{CE67} or \cite[(27)]{HS16}. Together with (\ref{2A.1}), this is the reason why limit theorems and deviation results from probability theory can be applied to treat the asymptotic behavior of $\bE f_k(D_N)$ and $\bE f_k(C_N)$.

\section{Monotonicity of the quotient}\label{sec3}

In stochastic geometry, monotonicity of expected face numbers is not always so easy to predict or to establish as one might expect; we mention \cite{DGGMR13} as one example. In the following, we deal with the quotient $\bE f_k(C_N)/\binom{N}{k}$ as a function of $N$.

\begin{theorem}\label{T3A.1}
If $N>d>k$, then
$$ \frac{\bE f_k(C_{N+1})}{\binom{N+1}{k}} < \frac{\bE f_k(C_N)}{\binom{N}{k}}.$$
\end{theorem}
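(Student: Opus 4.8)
The plan is to translate the statement into an inequality between Wendel probabilities and then into a log-concavity property of partial sums of binomial coefficients. Write $Q(N)$ for the quotient $\bE f_k(C_N)/\binom{N}{k}$, and set $F(n,m):=\bP(\xi_n\le m)=2^{-n}\sum_{i=0}^m\binom{n}{i}$, so that by (\ref{2A.1}) we have $P_{d,N}=F(N-1,d-1)$. Then (\ref{2A.3}) gives $Q(N)=F(N-k-1,d-k-1)/F(N-1,d-1)$, and, after cross-multiplying the positive denominators, the claim $Q(N+1)<Q(N)$ is equivalent to
\[
\frac{F(N-k,d-k-1)}{F(N-k-1,d-k-1)}<\frac{F(N,d-1)}{F(N-1,d-1)}.
\]
Both sides have the shape $R(n,m):=F(n+1,m)/F(n,m)$, namely $R(N-k-1,d-k-1)$ on the left and $R(N-1,d-1)$ on the right, and the two index pairs differ by $(k,k)$. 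Hence it suffices to prove the single diagonal step $R(n,m)<R(n+1,m+1)$ and iterate it $k\ge1$ times.

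First I would use the recursion $F(n+1,m)=\tfrac12\big(F(n,m)+F(n,m-1)\big)$, which records that $\xi_{n+1}$ is $\xi_n$ plus an independent fair coin flip. This yields $R(n,m)=\tfrac12\big(1+1/G(n,m)\big)$ with $G(n,m):=F(n,m)/F(n,m-1)\ge1$, so $R$ is strictly decreasing in $G$, and $R(n,m)<R(n+1,m+1)$ is equivalent to $G(n,m)>G(n+1,m+1)$. Since the factors $2^{-n}$ cancel, I can work with the integer partial sums $S_{n,m}:=\sum_{i=0}^m\binom{n}{i}$, giving $G(n,m)=S_{n,m}/S_{n,m-1}$. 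Applying Pascal's rule in the forms $S_{n+1,m}=S_{n,m}+S_{n,m-1}$ and $S_{n+1,m+1}=S_{n,m+1}+S_{n,m}$, a short computation reduces $G(n,m)>G(n+1,m+1)$ to the log-concavity inequality
\[
S_{n,m}^2>S_{n,m-1}\,S_{n,m+1},\qquad 1\le m\le n-1.
\]
The boundary case $m=0$ (which occurs only if $d=k+1$) is immediate since then $G(n,0)=\infty$, and the admissible range $m\le n-1$ is guaranteed throughout the iteration by $n-m=N-d\ge1$.

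It remains to prove this strict log-concavity of the partial binomial sums. Writing $a_i=\binom{n}{i}$, with $S_{n,m+1}=S_{n,m}+a_{m+1}$ and $S_{n,m-1}=S_{n,m}-a_m$, the difference $S_{n,m}^2-S_{n,m-1}S_{n,m+1}$ simplifies to $S_{n,m}(a_m-a_{m+1})+a_ma_{m+1}$. If $a_m\ge a_{m+1}$ this is manifestly positive, so the only real work lies in the regime $a_m<a_{m+1}$, where I must show $S_{n,m}<a_ma_{m+1}/(a_{m+1}-a_m)$; this is the main obstacle. I would overcome it with a geometric-series tail bound: the ratios $a_{i-1}/a_i=i/(n-i+1)$ increase with $i$, so for $i\le m$ they are bounded by $r:=a_m/a_{m+1}=(m+1)/(n-m)<1$. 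Hence $a_{m-j}\le a_m r^{\,j}$ and $S_{n,m}<a_m\sum_{j\ge0}r^{\,j}=a_m/(1-r)=a_ma_{m+1}/(a_{m+1}-a_m)$, exactly as required. (Alternatively one may invoke the classical fact that the partial sums of a strictly log-concave positive sequence are again strictly log-concave, the binomials $\binom{n}{i}$ being such a sequence.) Combining the three reductions then gives the strict monotonicity asserted in Theorem \ref{T3A.1}.
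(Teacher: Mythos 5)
Your proof is correct, and while it shares the paper's overall skeleton---reduce the claim to a one-step ``diagonal'' inequality and iterate it $k$ times---your key lemma is genuinely different. After rewriting the quotient via (\ref{2A.1}) and (\ref{2A.3}) as $F(N-k-1,d-k-1)/F(N-1,d-1)$ with $F(n,m)=\bP(\xi_n\le m)$, your diagonal step $R(n,m)<R(n+1,m+1)$ is, as you show, equivalent to strict log-concavity in $m$ of the partial sums $S_{n,m}=\sum_{i=0}^m\binom{n}{i}$, that is, to $S_{n,m}^2>S_{n,m-1}S_{n,m+1}$. The paper instead proves the diagonal inequality (\ref{5.0}), which compares the partial sums $S_j$ normalized by adjacent single binomial coefficients $\binom{N-j-1}{d-j-1}$, and for the crucial estimate it invokes Lemma 3(a) of \cite{HS20}; your argument is self-contained, since your geometric-series tail bound $S_{n,m}<a_m a_{m+1}/(a_{m+1}-a_m)$ (with $a_i=\binom{n}{i}$) re-derives inline exactly the kind of estimate that this external lemma provides. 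What your route buys is a cleaner, quotable intermediate statement (strict log-concavity of partial binomial sums, a classical fact) together with a structural explanation of the reduction via the coin-flip recursion $F(n+1,m)=\frac{1}{2}\left(F(n,m)+F(n,m-1)\right)$; what the paper's route buys is brevity, given the lemma already available from the earlier work. Your treatment of the edge cases is also sound: the case $m=0$ (possible only in the first step, when $d=k+1$) gives $R(n,0)=\frac{1}{2}<R(n+1,1)$ as you indicate, the constraint $m\le n-1$ holds along the whole diagonal because $n-m=N-d\ge 1$, and each of the $k\ge 1$ steps is strict, so the asserted strict inequality follows.
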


\begin{proof}
Let $d,N,k\in\N$ with $N>d>k$ be given. We define
$$ S_j:= \sum_{i=0}^{d-j-2} \binom{N-j-1}{i} \quad\mbox{for }j=0,\dots,k-1,$$
where $d$ and $N$ remain fixed and hence are not shown by the notation. Moreover, we set $S_j:=0$ for $j\ge d-1$. First we show that
\begin{equation}\label{5.0}
\frac{S_{j+1}}{\binom{N-j-2}{d-j-2}} < \frac{S_j}{\binom{N-j-1}{d-j-1}}
\end{equation}
for $j\le d-2$. For the proof, we use
\begin{equation}\label{5.1}
\sum_{i=0}^p \binom{M+1}{i} = 2\sum_{i=0}^{p-1}\binom{M}{i}+\binom{M}{p},
\end{equation}
with $M=N-j-2$ and $p=d-j-2$ , to get
\begin{eqnarray*}
S_j &=& \sum_{i=0}^{d-j-2} \binom{N-j-1}{i} = 2\sum_{i=0}^{d-j-3} \binom{N-j-2}{i} +\binom{N-j-2}{d-j-2} \\
&=& 2S_{j+1}+\binom{N-j-2}{d-j-2}.
\end{eqnarray*}
Thus, (\ref{5.0}) is equivalent to
$$ \frac{\binom{N-j-1}{d-j-1}}{\binom{N-j-2}{d-j-2}}S_{j+1} < 2S_{j+1}+\binom{N-j-2}{d-j-2},$$
which is equivalent to
$$
\frac{N-2d+j+1}{d-j-1}\, S_{j+1}<\binom{N-j-2}{d-j-2}.
$$
If $N-2d+j\le -1$, then this holds trivially. Hence, assume that $N-2d+j\ge 0$. Then the previous relation is equivalent to
\begin{equation}\label{5.4}
\frac{S_{j+1}}{\binom{N-j-2}{d-j-2}} < \frac{d-j-1}{N-2d+j+1}.
\end{equation}
From \cite[Lemma 3(a)]{HS20} with $n=N-j-2$ and $m=d-j-3$ if $d-j-2>0$ (the assumption $2m<n+1$ is satisfied since $N-2d+j\ge 0$) we obtain
$$ \frac{S_{j+1}}{\binom{N-j-2}{d-j-2}} \le \frac{d-j-2}{N-d+1} \cdot\frac{N-d+2}{N-2d+j+5} \le  \frac{d-j-2}{N-d+1} \cdot\frac{N-d+2}{N-2d+j+1}.$$
If $d-j-2=0$, this inequality holds trivially (note that $S_{j+1}=0$ in that case). Since $N-2d+j\ge 0$, the right side is strictly smaller than the right side of (\ref{5.4}). This proves (\ref{5.0}).

Now we recall from (\ref{2A.3}) and (\ref{2A.0}) that
$$ g_k(N):= \frac{\bE f_k(C_N)}{\binom{N}{k}}=2^k\frac{C(N-k,d-k)}{C(N,d)} = 2^k \frac{\sum_{i=0}^{d-k-1} \binom{N-k-1}{i}}{\sum_{i=0}^{d-1} \binom{N-1}{i}}.$$
We write this as
$$ g_k(N)= 2^k \frac{S_k+\binom{N-k-1}{d-k-1}}{S_0+\binom{N-1}{d-1}}.$$
Using (\ref{5.1}) again, we obtain
\begin{eqnarray*}
g_k(N+1) &=&2^k \frac{\sum_{i=0}^{d-k-1} \binom{N-k}{i}}{\sum_{i=0}^{d-1} \binom{N}{i}}= 2^k \frac{2\sum_{i=0}^{d-k-2} \binom{N-k-1}{i} + \binom{N-k-1}{d-k-1}}{2\sum_{i=0}^{d-2} \binom{N-1}{i} +\binom{N-1}{d-1}}\\
& = &2^k\frac{2S_k+\binom{N-k-1}{d-k-1}}{2S_0+\binom{N-1}{d-1}}
\end{eqnarray*}
(recalling that $S_k$ is defined with the fixed number $N$). Therefore, $g_k(N+1)<g_k(N)$ holds if and only if
\begin{equation}\label{5.2}
\frac{S_k}{\binom{N-k-1}{d-k-1}} < \frac{S_0}{\binom{N-1}{d-1}}.
\end{equation}
But this follows from the inequality (\ref{5.0}) by induction. The proof is complete.
\end{proof}

We remark that from $N\ge d+1$ and Theorem \ref{T3A.1} it follows that $g_k(N)\le g_k(d+1)$, hence
$$ \frac{\bE f_k(C_N)}{\binom{N}{k}} \le \frac{2^d-2^k}{2^d-1},$$
where equality holds if and only if $N=d+1$.

\section {The missing case in (\ref{1A.4}).}\label{sec4}

Under the assumption that $\frac{d}{N}\to\frac{1}{2}$ as $d\to\infty$ and that $k\in\N$ is fixed, we give an elementary proof for
\begin{equation}\label{1A.4.1}
\lim_{d\to\infty} \frac{\bE f_k(C_N)}{\binom{N}{k}} =1.
\end{equation}
A proof using more sophisticated tools from probability theory was given in \cite[Thm. 4.1]{GKT20}.

First we assume that $N<2d$ for all $d$. Then (\ref{1A.4.1}) holds by Theorem 2 in \cite{HS20}.

Now we assume that $N\ge 2d$ for all $d$. Let $0<\delta<1/2$ and choose integers $N_1=N_1(d)$ such that
$$ \frac{d}{N_1}\to\delta\quad\mbox{as }d\to\infty.$$
For sufficiently large $d$ we then have $\frac{d}{N}\ge \frac{d}{N_1}$ and hence $N\le N_1$. From Theorem \ref{T3A.1} and induction we get
$$ \frac{\bE f_k(C_N)}{\binom{N}{k}} \ge \frac{\bE f_k(C_{N_1})}{\binom{N_1}{k}}$$
and hence
$$ \liminf_{d\to\infty}\frac{\bE f_k(C_N)}{\binom{N}{k}} \ge \liminf_{d\to\infty}\frac{\bE f_k(C_{N_1})}{\binom{N_1}{k}} =(2\delta)^k,$$
the latter by (\ref{1A.4}). Since $\bE f_k(C_N)/\binom{N}{k}\le 1$ and $\delta<1/2$ was arbitrary, relation (\ref{1A.4.1}) follows.

Finally, for arbitrary $N=N(d)$ with $\frac{d}{N}\to\frac{1}{2}$, we consider the subsequence of all $N$ for which $N<2d$ and the subsequence of all $N$ for which $N\ge 2d$. Since either subsequence satisfies (\ref{1A.4.1}), the sequence itself satisfies (\ref{1A.4.1}).

\section {The critical case in (\ref{1A.3}).}\label{sec5}

In this section, we assume that (\ref{1A.1}) holds, and we ask for the asymptotic behavior of $\bE f_k(C_N)/\binom{N}{k}$ in the critical case $\rho=\rho_W(\delta)$, which was left out in (\ref{1A.3}).

The case $\delta<1/2$ and $\rho=0$ is not very interesting. If $k\to\infty$, it was remarked in \cite{HS20} after Theorem 1 that
$$ \lim_{d\to\infty} \frac{\bE f_k(C_N)}{\binom{N}{k}} =0.$$
If $k\to\infty$ is not required, we can choose a sequence $(k(d))_{d\in\N}$ that attains only two different values, each one infinitely often. Then it follows from (\ref{1A.4}) that the sequence $\left(\bE f_k(C_N)/\binom{N}{k}\right)_{d\in\N}$ has two subsequences with different limits, hence this sequence has no limit.

Now we consider the case
\begin{equation}\label{5A.1}
\delta>1/2 \quad\mbox{and} \quad \rho=\rho_W(\delta)= 2-1/\delta.
\end{equation}
Under the assumptions $N=\frac{d}{\delta}+c\sqrt{d}+o\left(\sqrt{d}\right)$ and $k=\left(2-\frac{1}{\delta}\right) d+ b\sqrt{d}+o\left(\sqrt{d}\right)$, Godland, Kabluchko and Th\"ale \cite[Thm. 4.4]{GKT20} have determined the limit for $\bE f_k(C_N)/\binom{N}{k}$. The following theorem improves and supplements their result.

If we assume that (\ref{1A.1}) holds and $\delta>1/2$, then $\rho=\rho_W(\delta)$ is equivalent to $\frac{N-2d+k}{d}\to 0$ as $d\to\infty$. Theorem \ref{T5A.1} describes how the asymptotic behavior of $N-2d+k$ is crucial. The comparison with $d^\alpha$, for $\alpha\in (0,1)$ is relevant, and it turns out that the exponent $\alpha=1/2$ is a new threshold.

In the following, if we write $f(d)\sim c\sqrt{d}$ for a constant $c\in\R$ (which may be zero), we mean that $f(d)/\sqrt{d}\to c$ as $d\to \infty$. By $\Phi$ we denote the distribution function of the standard normal distribution.

\begin{theorem}\label{T5A.1}
Suppose that $(\ref{1A.1})$ and $(\ref{5A.1})$ hold.

\noindent
{\rm (a)} If
\begin{equation}\label{5A.2}
N-2d+k \sim cd^{1/2} \quad\mbox{with }c\in\R,
\end{equation}
then
$$ \lim_{d\to\infty}  \frac{\bE f_k(C_N)}{\binom{N}{k}}=\Phi\left(-\frac{c}{\sqrt{2(1-\rho)}}\right)\in (0,1).$$
{\rm (b)} If
\begin{equation}\label{5A.3}
N-2d+k \ge cd^\alpha\quad\mbox{with }\frac{1}{2}<\alpha<1 \text{ and } c> 0,
\end{equation}
then
$$ \lim_{d\to\infty} \frac{\bE f_k( C_N)}{\binom{N}{k}} =0.$$
{\rm (c)} If
\begin{equation}\label{5A.4}
-cd^\alpha\le N-2d+k \le cd^\alpha\quad\mbox{with }0<\alpha<\frac{1}{2} \text{ and } c\ge 0,
\end{equation}
then
$$ \lim_{d\to\infty} \frac{\bE f_k( C_N)}{\binom{N}{k}} =\frac{1}{2}.$$
{\rm (d)} If
\begin{equation}\label{5A.5}
N-2d+k \le cd^\alpha\quad\mbox{with }\frac{1}{2}<\alpha<1 \text{ and } c< 0,
\end{equation}
then
$$ \lim_{d\to\infty}  \frac{\bE f_k( C_N)}{\binom{N}{k}} =1.$$
\end{theorem}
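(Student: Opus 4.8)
The plan is to reduce everything to a normal approximation of two binomial tail probabilities. By (\ref{2A.3}), (\ref{2A.1}) and (\ref{2A.0}),
$$\frac{\bE f_k(C_N)}{\binom{N}{k}} = \frac{P_{d-k,N-k}}{P_{d,N}} = \frac{\bP(\xi_{N-k-1}\le d-k-1)}{\bP(\xi_{N-1}\le d-1)},$$
where $\xi_n$ is binomial with parameters $n$ and $1/2$. Writing $s:=N-2d+k$, I first record two facts valid throughout, under (\ref{1A.1}) and (\ref{5A.1}): since $\rho=2-1/\delta$ we have $s/d\to 1/\delta-2+\rho=0$, so $s=o(d)$; and $(N-k)/d\to 1/\delta-\rho=2(1-\rho)$, so $N-k\sim 2(1-\rho)d\to\infty$ (as $\rho<1$). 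Standardizing the two tails, and using $2d-N=k-s$, the relevant arguments are
$$\frac{(d-1)-(N-1)/2}{\sqrt{N-1}/2}=\frac{k-s-1}{\sqrt{N-1}}, \qquad \frac{(d-k-1)-(N-k-1)/2}{\sqrt{N-k-1}/2}=\frac{-s-1}{\sqrt{N-k-1}}.$$

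The second step is to invoke the Berry--Esseen theorem for sums of i.i.d.\ Bernoulli$(1/2)$ variables: for an absolute constant $C$,
$$\sup_{m\in\Z}\left|\bP(\xi_n\le m)-\Phi\!\left(\frac{2m-n}{\sqrt n}\right)\right|\le \frac{C}{\sqrt n}.$$
Applying this with $(n,m)=(N-1,d-1)$ and with $(n,m)=(N-k-1,d-k-1)$ replaces each tail by the corresponding value of $\Phi$ up to an error $O(d^{-1/2})$, which is negligible in every regime below. For the denominator, $2d-N=k-s\sim\rho d\to+\infty$ because $\rho=2-1/\delta>0$, so its $\Phi$-argument tends to $+\infty$ and $P_{d,N}\to 1$ in all four cases. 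It therefore suffices to determine the limit of $\Phi\!\left(\frac{-s-1}{\sqrt{N-k-1}}\right)$, using $\sqrt{N-k-1}\sim\sqrt{2(1-\rho)}\,\sqrt d$ and the monotonicity of $\Phi$.

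The third step treats the numerator case by case. In (a), $s\sim cd^{1/2}$ gives $\frac{-s-1}{\sqrt{N-k-1}}\to -c/\sqrt{2(1-\rho)}$, whence the limit $\Phi(-c/\sqrt{2(1-\rho)})\in(0,1)$. In (b), $s\ge cd^\alpha$ with $c>0$, $\alpha>1/2$ yields, by monotonicity, an upper bound with argument $\le \frac{-cd^\alpha-1}{\sqrt{N-k-1}}\sim -\frac{c}{\sqrt{2(1-\rho)}}\,d^{\alpha-1/2}\to-\infty$, so the numerator tends to $0$. In (c), $|s|\le cd^\alpha$ with $\alpha<1/2$ forces the argument to $0$, so the numerator tends to $\Phi(0)=1/2$. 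In (d), $s\le cd^\alpha$ with $c<0$, $\alpha>1/2$ gives a lower bound with argument $\ge \frac{-cd^\alpha-1}{\sqrt{N-k-1}}\to+\infty$, so the numerator tends to $1$. Dividing by the denominator limit $1$ yields the four asserted values.

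\textbf{Main obstacle.} The analytic content is light, so the real work is the bookkeeping of error terms: checking that the $O(d^{-1/2})$ Berry--Esseen error is negligible uniformly (it is dominated by a nonzero constant limit in (a) and (c), and by a $\Phi$-value tending to $0$ or $1$ in (b) and (d)), and verifying the normalizing asymptotics $N-k\sim 2(1-\rho)d$ and $2d-N\sim\rho d$. The one genuinely delicate point is case (a), where the argument of $\Phi$ must converge to the exact constant $-c/\sqrt{2(1-\rho)}$; this requires the leading-order asymptotics of both $s$ and $N-k-1$, and it is precisely here that the threshold exponent $\alpha=1/2$ emerges.
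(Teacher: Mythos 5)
Your proof is correct. For parts (a) and (c) it is essentially the paper's own argument: reduce the quotient via (\ref{2A.3}), (\ref{2A.1}) to a ratio of binomial tail probabilities, apply the Berry--Esseen normal approximation, and compute the limits of the standardized arguments, with the denominator tending to $1$ because $2d-N\sim\rho d$ and $\rho=2-1/\delta>0$. Where you genuinely diverge is in parts (b) and (d). The paper does not return to the normal approximation there; instead it constructs comparison sequences $N_1<N$ (resp.\ $N_2>N$) satisfying $N_i-2d+k\sim c_i d^{1/2}$, applies part (a) to those sequences, and transfers the conclusion to $N$ via the monotonicity of $N\mapsto \bE f_k(C_N)/\binom{N}{k}$ proved in Theorem \ref{T3A.1}, finally letting $c_1\to\infty$ (resp.\ $c_2\to-\infty$). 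You instead feed the one-sided hypotheses (\ref{5A.3}) and (\ref{5A.5}) directly into the Berry--Esseen bound through the monotonicity of $\Phi$: since $N-k-1\sim 2(1-\rho)d$, the argument $(-s-1)/\sqrt{N-k-1}$ is forced to $-\infty$ (resp.\ $+\infty$) at rate $d^{\alpha-1/2}$, and the $O(d^{-1/2})$ error is harmless. Both routes are sound, and your bookkeeping (the standardizations, $s=o(d)$, $N-k\sim 2(1-\rho)d$) checks out. Your version buys self-containedness: Theorem \ref{T5A.1} becomes independent of the combinatorial monotonicity theorem, and all four cases are handled by one mechanism. The paper's version buys something different: it deliberately showcases Theorem \ref{T3A.1} (one of the paper's stated aims), treats part (a) as a black box so that (b) and (d) require no reopening of the probabilistic estimate, and the same template transfers verbatim to Proposition \ref{T2.2}, where the trivial monotonicity $P_{d,N_1}>P_{d,N}$ for $N_1<N$ plays the role of Theorem \ref{T3A.1}.
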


\begin{proof}
(a) Suppose that (\ref{5A.2}) holds. As observed in \cite{HS20} (proof of Theorem 2), it follows from the Berry--Esseen theorem that
\begin{align*}
\frac{\bE f_k(C_N)}{\binom{N}{k}}
&=\frac{\Phi\left(\frac{2d-N-k-1}{\sqrt{N-k-1}}\right)+O\left(\frac{1}{\sqrt{N-k-1}}\right)}{
\Phi\left(\frac{2d-N-1}{\sqrt{N-1}}\right)+O\left(\frac{1}{\sqrt{N-1}}\right)}.
\end{align*}
We have
$$ \frac{N-k-1}{d}\to \frac{1}{\delta}-\rho=2(1-\rho),$$
$$\frac{2d-N-k-1}{\sqrt{N-k-1}} =\frac{2d-N-k}{\sqrt{d}\sqrt{\frac{N-k-1}{d}}}-\frac{\frac{1}{\sqrt{d}}}{\sqrt{\frac{N-k-1}{d}}} \to \frac{-c}{\sqrt{2(1-\rho)}},$$
$$\frac{2d-N-1}{\sqrt{N-1}} = \frac{\sqrt{d}\left(2-\frac{N}{d}-\frac{1}{d}\right)}{\sqrt{\frac{N-1}{d}}}  \to\infty,$$
where we have used that $\delta>1/2$.
The assertion follows.

(b) Suppose that (\ref{5A.3}) holds. For any given number $c_1\in\R$ we can choose a sequence $(N_1(d))_{d\in\N}$ of integers such that $N_1=N_1(d)$ satisfies $N_1-2d+k\sim c_1d^{1/2}$ and $N_1<N$ for sufficiently large $d$. From (a) it follows that $\lim_{d\to\infty} (\bE f_k(C_N)/\binom{N}{k}) =\Phi(-c_1/\sqrt{2(1-\rho)})$. The monotonicity proved in Theorem \ref{T3A.1} now yields
$$ \limsup_{d\to\infty} \frac{\bE f_k(C_N)}{\binom{N}{k}} = \Phi\left(\frac{-c_1}{\sqrt{2(1-\rho)}}\right).$$
Since this holds for all $c_1\in \R$, the assertion of (b) follows.

(c) Suppose that (\ref{5A.4}) holds. Then we have
$$  \frac{|N-2d+k|}{\sqrt{d}} \le c d^{\alpha-\frac{1}{2}} \to 0$$
as $d\to\infty$. Hence $N-2d+k\sim 0\sqrt{d}$ and the assertion follows
from (a), since
$$ \lim_{d\to\infty} \frac{\bE f_k(C_N)}{\binom{N}{k}} =\Phi(0)=\frac{1}{2}.$$

(d) Suppose that (\ref{5A.5}) holds. Since $(N-2d+k)/d^{1/2}\to-\infty$, for any given number $c_2\in\R$ we can chose a sequence $(N_2(d))_{d\in\N}$ of integers such that $N_2=N_2(d)$ satisfies $(N_2-2d+k)/d^{1/2}\to c_2$ and $N_2>N$ for sufficiently large $d$. From (a) it follows that
$$ \liminf_{d\to\infty} \frac{\bE f_k(C_N)}{\binom{N}{k}} \ge \Phi\left(-c_2/\sqrt{2(1-\rho)}\right).$$
Since this holds for all $c_2\in \R$, we get the assertion of (d).
\end{proof}

Finally, if $\delta=\frac{1}{2}$ and $\rho=0$, then by suitable choices of sequences $N(d),k(d)$, any limit in $[0,1]$ can be achieved for $\bE f_k(C_N)/\binom{N}{k}$, and (hence) in general no limit exists. This can be deduced from the fact, following from the proof of part (a) of Theorem \ref{T5A.1} and also shown in \cite[Thm 4.4]{GKT20}, that $N-2d\sim c\sqrt{d}$ and $k\sim b\sqrt{d}$ together imply
$$ \lim_{d\to\infty} \frac{\bE f_k(C_N)}{\binom{N}{k}} =\frac{\Phi\left(-(c+b)/\sqrt{2}\right)}{\Phi\left(-c/\sqrt{2}\right)}.$$

\vspace{3mm}

Similar results for the Donoho--Tanner random cones $D_N$ follow immediately from corresponding results for the Wendel probabilities. We formulate these here for completeness.

\begin{proposition}\label{T2.1}
Suppose that
$$ \frac{d}{N} \to\delta \quad\mbox{as }d\to\infty,$$
with a number $\delta\in[0,1]$. Then the Wendel probabilities satisfy
$$ \lim_{d\to\infty} P_{d,N}= \left\{\begin{array}{ll} 1 & \mbox{if } 1/2<\delta\le1,\\[1mm]
0 & \mbox{if } 0\le \delta< 1/2.\end{array}\right.$$
Moreover, the convergence is exponentially fast.
\end{proposition}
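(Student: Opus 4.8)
The plan is to reduce everything to a concentration estimate for a symmetric binomial sum. By (\ref{2A.1}) we have $P_{d,N}=\bP(\xi_{N-1}\le d-1)$, where $\xi_{N-1}$ is binomial with parameters $N-1$ and $1/2$, hence a sum of $N-1$ independent Bernoulli$(1/2)$ variables with mean $(N-1)/2$. The whole statement hinges on the position of the cut-off $d-1$ relative to this mean: since $2(d-1)/(N-1)\to 2\delta$, the threshold $d-1$ asymptotically exceeds $(N-1)/2$ when $\delta>1/2$ and lies below it when $\delta<1/2$. Thus $P_{d,N}$ is the probability of a large deviation of $\xi_{N-1}$, and the claimed dichotomy, together with exponential speed, should follow from a single Chernoff--Hoeffding bound.

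First I would treat the case $\delta>1/2$. Here I look at the complementary probability $1-P_{d,N}=\bP(\xi_{N-1}\ge d)$ and set $t:=d-(N-1)/2$. Because $d=\delta N+o(N)$, one has $t=(\delta-\tfrac12)N+o(N)$, so $t$ is eventually positive and of order $N$. Hoeffding's inequality for the upper tail gives
$$1-P_{d,N}=\bP\!\left(\xi_{N-1}-\tfrac{N-1}{2}\ge t\right)\le \expp\!\left(-\frac{2t^2}{N-1}\right),$$
and since $t^2/(N-1)=(\delta-\tfrac12)^2N(1+o(1))$, the right-hand side decays like $\expp\bigl(-2(\delta-\tfrac12)^2N\bigr)$. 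Hence $P_{d,N}\to 1$ exponentially fast; the boundary value $\delta=1$ is included, with even faster decay.

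For $0\le\delta<1/2$ I would apply the lower-tail Hoeffding bound directly to $P_{d,N}=\bP(\xi_{N-1}\le d-1)$. Writing $s:=\tfrac{N-1}{2}-(d-1)=(\tfrac12-\delta)N+o(N)$, which is eventually positive and of order $N$, one gets
$$P_{d,N}=\bP\!\left(\xi_{N-1}\le \tfrac{N-1}{2}-s\right)\le \expp\!\left(-\frac{2s^2}{N-1}\right)=\expp\!\left(-2(\tfrac12-\delta)^2N(1+o(1))\right),$$
so $P_{d,N}\to 0$ exponentially fast, with $\delta=0$ covered as well. Finally, since $N\ge d+1$, any bound exponential in $N$ is also exponential in $d$, which is the intended meaning of \emph{exponentially fast} here.

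I do not expect a serious obstacle. The one point requiring a little care is uniformity at the boundary values $\delta\in\{0,1\}$ and in the regime where $N$ may grow faster than $d$ (possible when $\delta=0$). But the Hoeffding bound holds for every $N$, and the deviations $t$ and $s$ remain of order $N$ throughout the relevant ranges, so a single application of the inequality handles all cases at once; making the rate explicit in $d$ is then immediate from $N>d$.
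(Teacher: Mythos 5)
Your proposal is correct and follows essentially the same route as the paper: both reduce $P_{d,N}$ via (\ref{2A.1}) to a binomial tail probability and apply the exponential Chernoff--Hoeffding bound to the upper tail (when $\delta>1/2$, via the complement) and to the lower tail (when $\delta<1/2$). The only cosmetic difference is that the paper cites Okamoto's inequality, which for the symmetric binomial is exactly the Hoeffding bound you use.
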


\begin{proof}
This follows from the Remark in \cite{HS20}, after the proof of Theorem 7. For the reader's convenience, we reproduce the short proof here. Let $\delta>\frac{1}{2}$. If $d$ is sufficiently large, we have $\frac{d}{N-1}-\frac{1}{2}\ge \frac{1}{\sqrt{2}}\left(\delta-\frac{1}{2}\right)>0$. Therefore,
\begin{eqnarray*}
\bP(\xi_{N-1}\ge d) &=& \bP\left(\frac{\xi_{N-1}}{N-1}-\frac{1}{2}\ge \frac{d}{N-1}-\frac{1}{2}\right)\\
&\le& \exp\left(-2\left(\frac{d}{N-1}-\frac{1}{2}\right)^2(N-1)\right)
\end{eqnarray*}
by Okamoto \cite[Theorem  2(i)]{Oka58} (which can be applied since $\frac{d}{N-1}-\frac{1}{2}>0$). It follows that
$$ P_{d,N} =\bP(\xi_{N-1}\le d-1)\ge 1-\exp\left(-\left(\delta-\frac{1}{2}\right)^2(N-1)\right)\to 1$$
as $d\to\infty$.

Let $\delta<\frac{1}{2}$. If $d$ is sufficiently large, we have $\frac{1}{2}-\frac{d-1}{N-1}\ge \frac{1}{\sqrt{2}}\left(\frac{1}{2}-\delta\right)>0$. Therefore,
\begin{eqnarray*}
\bP(\xi_{N-1}\le d-1) &=& \bP\left(\frac{\xi_{N-1}}{N-1}-\frac{1}{2}\le -\left(\frac{1}{2}-\frac{d-1}{N-1}\right)\right)\\
&\le& \exp\left(-2\left(\frac{d-1}{N-1}-\frac{1}{2}\right)^2(N-1)\right)
\end{eqnarray*}
by \cite[Theorem  2(ii)]{Oka58}. It follows that
$$ P_{d,N} =\bP(\xi_{N-1}\le d-1)\le \exp\left(-\left(\delta-\frac{1}{2}\right)^2(N-1)\right)\to 0$$
as $d\to\infty$.
\end{proof}

The critical case $\delta=1/2$ is covered by the following proposition.

\begin{proposition}\label{T2.2}
Let $c$ be a real constant.

\noindent
{\rm (a)} If
\begin{equation}\label{2.a}
N-2d\sim cd^{1/2},
\end{equation}
then
$$ \lim_{d\to\infty} P_{d,N}= \Phi\left(-c/\sqrt{2}\right)\in (0,1).$$
{\rm (b)} If
\begin{equation}\label{2.2}
N-2d \ge c d^\alpha\quad\mbox{with }\frac{1}{2}<\alpha<1 \text{ and } c> 0,
\end{equation}
then
$$ \lim_{d\to\infty} P_{d,N} =0.$$
{\rm (c)} If
\begin{equation}\label{2.1}
-cd^\alpha\le N-2d \le cd^\alpha\quad\mbox{with }0<\alpha<\frac{1}{2} \text{ and } c\ge 0,
\end{equation}
then
$$ \lim_{d\to\infty} P_{d,N} =\frac{1}{2}.$$
{\rm (d)} If
\begin{equation}\label{2.2b}
N-2d \le cd^\alpha\quad\mbox{with }\frac{1}{2}<\alpha<1 \text{ and } c< 0,
\end{equation}
then
$$ \lim_{d\to\infty} P_{d,N} =1.$$

In particular, for $1/2<\alpha<1$ we can choose $N=N(d)$ in such a way that $\frac{d}{N}\to\frac{1}{2}$,
$$ N-2d=O(d^\alpha),$$
and
$$ \liminf_{d\to\infty} P_{d,N} = 0,\qquad \limsup_{d\to\infty} P_{d,N}= 1.$$
\end{proposition}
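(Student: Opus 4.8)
The plan is to follow the pattern of the proof of Theorem~\ref{T5A.1}, with the quotient there replaced by the single Wendel probability $P_{d,N}$; the analytic engine is again the Berry--Esseen theorem applied through the representation~(\ref{2A.1}). Part~(a) is the core. Writing $\xi_{N-1}$ for a Binomial$(N-1,1/2)$ variable, (\ref{2A.1}) gives $P_{d,N}=\bP(\xi_{N-1}\le d-1)$; since $\xi_{N-1}$ has mean $(N-1)/2$ and standard deviation $\tfrac12\sqrt{N-1}$, standardizing and invoking Berry--Esseen yields
$$ P_{d,N}=\Phi\!\left(\frac{2d-N-1}{\sqrt{N-1}}\right)+O\!\left(\frac{1}{\sqrt{N-1}}\right). $$
Under~(\ref{2.a}) one has $N\sim 2d$, hence $\sqrt{N-1}\sim\sqrt{2d}$ and $2d-N-1\sim -c\sqrt d$, so the argument of $\Phi$ tends to $-c/\sqrt2$ while the error tends to $0$ (because $N\to\infty$); thus $P_{d,N}\to\Phi(-c/\sqrt2)\in(0,1)$. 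Part~(c) then follows at once: under~(\ref{2.1}) with $\alpha<1/2$ we have $|N-2d|/\sqrt d\le c\,d^{\alpha-1/2}\to0$, which is~(\ref{2.a}) with $c=0$, so the limit is $\Phi(0)=1/2$.

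For parts~(b) and~(d) I would use the monotonicity of $P_{d,N}$ in $N$, which is elementary from~(\ref{2A.1}): writing $\xi_N=\xi_{N-1}+B$ with $B$ an independent Bernoulli$(1/2)$ shows $\{\xi_N\le d-1\}\subseteq\{\xi_{N-1}\le d-1\}$, so $P_{d,N+1}\le P_{d,N}$. For~(b), given $c_1\in\R$ choose integers $N_1(d)$ with $N_1-2d\sim c_1\sqrt d$; since the hypothesis $N-2d\ge c\,d^\alpha$ with $\alpha>1/2$, $c>0$ grows faster than $\sqrt d$, we get $N_1<N$ for large $d$, hence $P_{d,N}\le P_{d,N_1}\to\Phi(-c_1/\sqrt2)$ by~(a). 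Letting $c_1\to+\infty$ forces $\limsup_{d\to\infty}P_{d,N}\le0$, so $P_{d,N}\to0$. Part~(d) is the mirror image: under~(\ref{2.2b}) one has $(N-2d)/\sqrt d\to-\infty$, so for any $c_2\in\R$ a sequence $N_2(d)$ with $N_2-2d\sim c_2\sqrt d$ satisfies $N_2>N$ for large $d$, giving $P_{d,N}\ge P_{d,N_2}\to\Phi(-c_2/\sqrt2)$; letting $c_2\to-\infty$ yields $\liminf_{d\to\infty}P_{d,N}\ge1$, whence $P_{d,N}\to1$.

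For the concluding ``In particular'' assertion I would split the index set $\N$ into two infinite subsets $A$ and $B$ and define $N(d)=2d+\lceil d^\alpha\rceil$ for $d\in A$ and $N(d)=2d-\lceil d^\alpha\rceil$ for $d\in B$. Then $|N-2d|\le d^\alpha+1=O(d^\alpha)$ and $d/N\to1/2$ throughout; along $A$ the hypothesis of~(b) holds (with $c=1$) and along $B$ the hypothesis of~(d) holds (with $c=-1$), so $P_{d,N}\to0$ along $A$ and $P_{d,N}\to1$ along $B$. As both subsets are infinite, $\liminf_{d\to\infty}P_{d,N}=0$ and $\limsup_{d\to\infty}P_{d,N}=1$.

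The only genuine work is the Berry--Esseen step in part~(a); the rest is bookkeeping with the limit from~(a) and the monotonicity of $P_{d,N}$. The point most easily overlooked is verifying that the comparison sequences can be sandwiched strictly below (resp.\ above) $N$ for large $d$ -- and this is precisely where the gap between the $d^{1/2}$ scale of~(a) and the $d^\alpha$ scale with $\alpha>1/2$ of~(b) and~(d) is used.
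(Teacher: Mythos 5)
Your proposal is correct and follows essentially the same route as the paper: Berry--Esseen applied to the representation $P_{d,N}=\bP(\xi_{N-1}\le d-1)$ for part (a), part (c) as the $c=0$ instance of (a), parts (b) and (d) by sandwiching with comparison sequences $N_1<N$ (resp.\ $N_2>N$) on the $d^{1/2}$ scale via the monotonicity of $P_{d,N}$ in $N$, and the final claim by an explicit two-subsequence construction. The only differences are cosmetic: you spell out the coupling proof of the monotonicity and the concrete choice $N(d)=2d\pm\lceil d^\alpha\rceil$, both of which the paper leaves as ``trivial'' or ``suitable subsequences.''
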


\begin{proof}
(a) Let (\ref{2.a}) be satisfied. As in the proof of Theorem 2 in \cite{HS20}, we have
$$ P_{d,N} =\Phi\left(\frac{2d-N-1}{\sqrt{N-1}}\right)+O\left(\frac{1}{\sqrt{N-1}}\right).$$
The assumption (\ref{2.a}) implies that
$$\frac{2d-N-1}{\sqrt{N-1}}\to -\frac{c}{\sqrt{2}}\quad\mbox{as }d\to\infty,$$
so that the assertion follows.

Assertions (b), (c), (d) follow from (a) in combination with the trivial monotonicity, $P_{d,N_1} > P_{d,N}$ if $N_1<N$, similarly as in the proof of Theorem \ref{T5A.1}.

The remaining assertion follows by choosing a sequence $(N(d))_{d\in\N}$ with suitable subsequences.
\end{proof}

As mentioned in (\ref{2A.2}), we have
\begin{equation}\label{3.1}
\frac{\bE f_k(D_N)}{\binom{N}{k}} = P_{d-k,N-k}.
\end{equation}
Let us first remark that Proposition \ref{T2.1} allows us to give a very short proof of (\ref{1A.2}). In fact, under the assumptions of (\ref{1A.2}), we have $d-k\to \infty$ as $d\to\infty$, since $\rho<1$. Further, the limit
$$ \lim_{d\to\infty} \frac{d-k}{N-k} = \frac{1-\rho}{1/\delta-\rho}$$
is larger (smaller) than $1/2$ if $\rho<\rho_W(\delta)$ (respectively,  $\rho>\rho_W(\delta)$). Hence, Proposition \ref{T2.1} implies (\ref{1A.2}) immediately.

In view of (\ref{3.1}), a counterpart to Theorem \ref{T5A.1} for $D_N$ can immediately be derived from Proposition \ref{T2.2}. We refrain from stating it explicitly.

Further, we remark that under the assumptions
$$ \frac{d}{N}\to\delta\quad\mbox{as }d\to\infty,\qquad k \mbox{ fixed},$$
we have
$$ \lim_{d\to\infty} \frac{\bE f_k(D_N)}{\binom{N}{k}} =\left\{ \begin{array}{ll} 1 &\mbox{if } 1/2<\delta\le 1,\\ 1/2 & \mbox{if } \delta=1/2,\\ 0 & \mbox{if }0\le \delta<1/2.\end{array}\right.$$
The cases $1/2<\delta$ and $\delta< 1/2$ follow from Proposition \ref{T2.1}, and the case $\delta=1/2$ follows from Proposition \ref{T2.2}(a) with $c=0$.

\section{Differences instead of quotients}\label{sec6}

The asymptotic relation (\ref{1A.2}) deals with the quotient $\bE f_k(D_N)/\binom{N}{k}$. Instead of this, we can also consider the difference $\binom{N}{k}- \bE f_k(D_N)$. Here a smaller critical number appears, which first showed up in related work of Donoho and Tanner \cite{DT10}. To recall it, we denote by ${\sf H}$ the binary entropy function with base $e$,
$$ {\sf H}(x):= -x\log x-(1-x)\log(1-x)\quad\mbox{for } 0\le x\le 1,$$
and consider the function
$$ G(\delta,\rho) := {\sf H}(\delta)+\delta {\sf H}(\rho)-(1-\rho\delta)\log 2 \quad\mbox{for } \rho,\delta\in[0,1],$$
introduced by Donoho and Tanner (with different notation). For fixed $\delta\in(1/2,1)$, the function $G_\delta$ defined by $G_\delta(x):= G(\delta,x)$ has a unique zero in $[0,1]$ (see, e.g., Lemma 6 in \cite{HS20}), which is denoted by $\rho_S(\delta)$.

We remark that Donoho and Tanner \cite{DT10} have used Boole's inequality to show that
$$ \bP\left(f_k(D_N)=\binom{N}{k}\right) \ge 1-\left[\binom{N}{k}-\bE f_k(D_N)\right],$$
and have proved that the right-hand side tends to $1$ as $d\to\infty$ if $\rho<\rho_S(\delta)$. The following theorem improves the latter fact and shows that for the difference $\binom{N}{k}- \bE f_k(D_N)$, the number $\rho_S(\delta)$ is, in fact, a threshold. We say that a function $g$ depending on the dimension $d$ remains in an interval $I$ if $g(d)\in I$ for all $d$. An interval is called positive if its bounds are positive numbers.

\begin{theorem}\label{T6.1}
Let $1/2<\delta<1$ and $0<\rho<\rho_W(\delta) =2-1/\delta$ be given. Let $k<d<N$ be integers (depending on $d$) such that
$$ \frac{d}{N}=:\delta_d\to\delta,\qquad \frac{k}{d}=:\rho_d\to \rho\qquad\mbox{as } d\to\infty.$$
Then
\begin{equation}\label{6.0}
\binom{N}{k}- \bE f_k(D_N) =\frac{1}{N} e^{NG(\delta_d,\rho_d)}\cdot g(d),
\end{equation}
where $g$ remains in a bounded positive interval.

In particular,
$$ \lim_{d\to\infty} \left[\binom{N}{k}-\bE f_k(D_N)\right] =\left\{\begin{array}{ll} 0 & \mbox{if }\rho <\rho_S(\delta),\\[1mm]
\infty & \mbox{if }\rho>\rho_S(\delta).\end{array}\right.$$

If $\rho=\rho_S(\delta)$, then the numbers $N$ and $k$  can be chosen such that
$$ \liminf_{d\to\infty} \left[\binom{N}{k}- \bE f_k(D_N)\right]=0\quad\mbox{and} \quad  \limsup_{d\to\infty} \left[\binom{N}{k}- \bE f_k(D_N)\right]=\infty.$$
\end{theorem}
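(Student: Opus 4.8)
The plan is to reduce the difference to a single partial binomial sum, isolate its dominant term, and then read off both the exponential rate and the polynomial prefactor via Stirling's formula.

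First I would use $(\ref{2A.2})$ to write $\binom{N}{k}-\bE f_k(D_N)=\binom{N}{k}\left(1-P_{d-k,N-k}\right)$. By $(\ref{2A.1})$ we have $1-P_{d-k,N-k}=\bP(\xi_{N-k-1}\ge d-k)$, and the symmetry $\binom{N-k-1}{i}=\binom{N-k-1}{N-k-1-i}$ turns this into the exact identity
\begin{equation*}
\binom{N}{k}-\bE f_k(D_N)=\frac{\binom{N}{k}}{2^{N-k-1}}\sum_{j=0}^{N-d-1}\binom{N-k-1}{j},
\end{equation*}
on which everything rests; no asymptotics enter yet. Next I would isolate the largest term of the sum. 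The ratio of consecutive summands is $\binom{N-k-1}{j}/\binom{N-k-1}{j+1}=(j+1)/(N-k-1-j)$, which increases in $j$, so the top term $j=N-d-1$ dominates. The hypothesis $\rho<\rho_W(\delta)=2-1/\delta$ is precisely what forces the largest such ratio, $\frac{N-d-1}{d-k+1}$, to converge to $\frac{1-\delta}{\delta(1-\rho)}<1$; hence for large $d$ every consecutive ratio is bounded by a fixed $q<1$, the sum is geometric-type, and
\begin{equation*}
\binom{N-k-1}{N-d-1}\le\sum_{j=0}^{N-d-1}\binom{N-k-1}{j}\le\frac{1}{1-q}\binom{N-k-1}{N-d-1}.
\end{equation*}
Thus the sum equals its top term times a factor trapped in the positive interval $[1,1/(1-q)]$.

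It then remains to evaluate $\binom{N}{k}\binom{N-k-1}{N-d-1}2^{-(N-k-1)}$ by Stirling's formula. Each binomial coefficient contributes $\exp(N\,{\sf H}(\cdot))$ times a prefactor of order $N^{-1/2}$, since the relevant ratios $k/N\to\rho\delta$ and $\tfrac{N-d-1}{N-k-1}\to\tfrac{1-\delta}{1-\rho\delta}$ stay strictly inside $(0,1)$; the two prefactors combine to order $N^{-1}$, matching the claimed factor $1/N$. The exponential rate is
\begin{equation*}
N\,{\sf H}(\rho_d\delta_d)+(N-k-1)\,{\sf H}\!\left(\tfrac{N-d-1}{N-k-1}\right)-(N-k-1)\log 2,
\end{equation*}
and a short algebraic computation using the identity ${\sf H}(ab)+(1-ab)\,{\sf H}\!\left(\tfrac{1-a}{1-ab}\right)={\sf H}(a)+a\,{\sf H}(b)$, applied with $a=\delta_d$ and $b=\rho_d$, collapses this to $N\,G(\delta_d,\rho_d)$. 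Collecting the bounded factor from the sum with the bounded Stirling corrections yields $(\ref{6.0})$ with $g$ in a bounded positive interval. The delicate point, and where I expect the real bookkeeping to lie, is uniformity: one must keep all entropy arguments and the ratio $q$ bounded away from $0,1/2,1$ uniformly in $d$, so that both the prefactor $\Theta(1/N)$ and the positivity of $g$ hold with constants independent of $d$.

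Finally, the three consequences follow from $(\ref{6.0})$ together with the sign behaviour of $G_\delta$. Since $G_\delta(0)={\sf H}(\delta)-\log 2<0$ for $\delta>1/2$ and $\rho_S(\delta)$ is the unique zero of $G_\delta$, one has $G(\delta,\rho)<0$ for $\rho<\rho_S(\delta)$ and $G(\delta,\rho)>0$ for $\rho_S(\delta)<\rho<\rho_W(\delta)$ (cf.\ Lemma 6 in \cite{HS20}). As $g$ is bounded, the factor $\frac{1}{N}e^{NG(\delta_d,\rho_d)}$ tends to $0$ when $G(\delta,\rho)<0$ (exponential decay beats $1/N$) and to $\infty$ when $G(\delta,\rho)>0$ (exponential growth beats $1/N$), giving the dichotomy. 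For the critical case $\rho=\rho_S(\delta)$ I would exploit the freedom in choosing $k=k(d)$: selecting $\rho_d\le\rho_S(\delta_d)$ along one subsequence forces $G(\delta_d,\rho_d)\le 0$, whence the difference is $\le 1/N\to 0$; choosing $\rho_d$ above $\rho_S(\delta_d)$ by a margin large enough that $N\,G(\delta_d,\rho_d)-\log N\to+\infty$ along a second subsequence forces the difference to $\infty$. Interlacing the two subsequences yields $\liminf=0$ and $\limsup=\infty$.
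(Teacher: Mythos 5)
Your proposal is correct and follows essentially the same route as the paper: the same exact identity reducing the difference to $\binom{N}{k}\,2^{-(N-k-1)}\sum_{j=0}^{N-d-1}\binom{N-k-1}{j}$, domination of that partial sum by its top term (you reprove via a geometric series what the paper cites as Lemma 3 of \cite{HS20}, with the same role for $\rho<\rho_W(\delta)$), Stirling plus the entropy identity producing $G(\delta_d,\rho_d)$, the sign dichotomy for $G_\delta$, and interlaced subsequences at $\rho=\rho_S(\delta)$ (your choice of one subsequence with $G\le 0$ and one with $NG-\log N\to\infty$ is a harmless variant of the paper's explicit choice $G(\delta,\rho(d))=((-1)^d\tfrac12+1)\frac{\log N}{N}$). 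The only imprecision is that with your off-by-one fractions the exponent equals $NG(\delta_d,\rho_d)$ only up to an additive $O(1)$ term (the paper avoids this by first rewriting $\binom{N-k-1}{N-d-1}=\sigma_d\binom{N-k}{N-d}$), but this correction is absorbed into the bounded positive factor $g(d)$, so nothing breaks.
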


\begin{proof}
From (\ref{3.1}) we have
\begin{equation}\label{6.1}
\binom{N}{k}-\bE f_k(D_N)= \binom{N}{k}\left[1-P_{d-k,N-k}\right]=\binom{N}{k}P_{N-d,N-k},
\end{equation}
since $P_{m,M}+P_{M-m,M}=1$, hence
\begin{equation}\label{6.2}
\binom{N}{k}-\bE f_k(D_N) =  \binom{N}{k} \frac{1}{2^{N-k-1}} \sum_{i=0}^{N-d-1} \binom{N-k-1}{i}.
\end{equation}
We consider each of the three factors. Abbreviating $\delta_d\rho_d=:\tau_d$, we obtain from Stirling's approximation
$$
 n!= \sqrt{2\pi n}\,e^{-n}n^ne^{\theta/12n},\quad 0<\theta=\theta(n)<1,
$$
that
\begin{equation}\label{6.3}
\binom{N}{k} =\frac{1}{\sqrt{2\pi \tau_d(1-\tau_d)}}\cdot \frac{1}{\sqrt{N}}\cdot e^{N {\sf H}(\tau_d)}\cdot e^{\frac{\theta_1}{12N}},
\end{equation}
with a number $\theta_1$ (depending on $d$) in a fixed interval independent of $d$. Next, we write
\begin{equation}\label{6.4}
\frac{1}{2^{N-k-1}} =2e^{-N(1-\tau_d)\log 2}.
\end{equation}
To estimate the sum of binomial coefficients, we note that Lemma 3 of \cite{HS20} yields the following. If $n,m\in\N$ are integers with $2m<n+1$, then
$$ \binom{n}{m} \le\sum_{i=0}^m \binom{n}{i} \le\binom{n}{m}\frac{n-m+1}{n-2m+1}.$$
With $m=N-d-1$ and $n=N-k-1$ we have $2m<n$ for sufficiently large $d$, since
$$ \frac{N-d-1}{N-k-1} < \frac{N-d}{N-k}=:\sigma_d =\frac{1-\delta_d}{1-\tau_d} \to \frac{1-\delta}{1-\delta\rho} <\frac{1}{2}.$$
Since
$$ \frac{n-m+1}{n-2m+1} =\frac{d-k+1}{2d-N-k+2} \to \frac{1-\rho}{\rho_W(\delta)-\rho}$$
as $d\to\infty$ and
$$
\binom{N-k-1}{N-d-1}=\binom{N-k}{N-d}\sigma_d,
$$
we see that
$$\sum_{i=0}^{N-d-1} \binom{N-k-1}{i} =\binom{N-k}{N-d}\cdot g_1(d),$$
where $g_1$ remains in a bounded positive interval. Stirling's approximation gives
\begin{equation}\label{6.5}
\binom{N-k}{N-d} =\frac{1}{\sqrt{2\pi \sigma_d(1-\sigma_d)}} \cdot \frac{1}{\sqrt{N-k}}\cdot e^{(N-k) {\sf H}(\sigma_d)}\cdot e^{\frac{\theta_2}{12(N-k)}}
\end{equation}
with a number $\theta_2$ in a fixed interval independent of $d$. We combine (\ref{6.3}), (\ref{6.4}), (\ref{6.5}) and note that each of
$$ \frac{e^{\theta_1/12N}}{\sqrt{2\pi\tau_d(1-\tau_d)}}, \qquad \frac{e^{\theta_2/12(N-k)}}{\sqrt{2\pi\sigma_d(1-\sigma_d)}},\qquad \sqrt{\frac{N}{N-k}}$$
has a positive finite limit as $d\to\infty$. It follows that
$$ \binom{N}{k} -\bE f_k(D_N) = \frac{1}{N} e^{N[{\sf H}(\tau_d)+(1-\tau_d){\sf H}(\sigma_d)-(1-\tau_d)\log 2]}\cdot g(d),$$
where $g$ remains in a bounded positive interval.

Since an elementary calculation yields that
\begin{equation}\label{6.5a}
{\sf H}(\tau_d) +(1-\tau_d){\sf H}(\sigma_d) -(1-\tau_d)\log 2 = G(\delta_d,\rho_d),
\end{equation}
relation (\ref{6.0}) follows.

Now suppose that $\rho<\rho_S(\delta)$. Since $G(\delta_d,\rho_d) \to G(\delta,\rho) <0$, there is a constant $c$ such that $G(\delta_d,\rho_d) \le c<0$ for all large $d$. Hence, (\ref{6.0}) implies that $\lim_{d\to\infty} \left[\binom{N}{k} -\bE f_k(D_N)\right] =0$. This was proved in a different way in \cite{DT10}.

The assertion that  $\lim_{d\to\infty} \left[\binom{N}{k} -\bE f_k(D_N)\right] =\infty$ if $\rho>\rho_S(\delta)$ (which implies that $G(\delta,\rho)>0$) is now obtained in the same way.

Now we assume that $\rho=\rho_S(\delta)$. To prove the remaining assertion, we start with the fixed $\delta\in(1/2,1)$ and define $N=N(d)$ by
$$
N:= \lfloor d/\delta\rfloor, \qquad\mbox{hence } 0\le\delta_d-\delta<\frac{1}{N}.
$$
The function $G(\delta,\cdot)$ is equal to $0$ at $\rho$ and is strictly increasing in a neighborhood of $\rho$. Since $0<\left((-1)^d\frac{1}{2}+1\right)  N^{-1}\cdot \log N\to 0$ as $d\to\infty$, for sufficiently large $d$ there is a unique number $\rho(d)\in (\rho,1)$ such that
$$ G(\delta,\rho(d)) =\left((-1)^d\frac{1}{2}+1\right)\cdot\frac{\log N}{N}.$$
We define the number $k=k(d)$ by
$$  k:= \lfloor \rho(d)d\rfloor, \qquad\mbox{hence } 0\le\rho(d)-\rho_d<\frac{2}{N},$$
since we can assume that $d/N>1/2$ if $d$ is sufficiently large.

The function $G$ is differentiable in a neighborhood of $(\delta,\rho)$, hence there are positive constants $c_1,c_2,c_3$, independent of $d$, such that for sufficiently large $d$ we have
\begin{eqnarray*}
|G(\delta_d,\rho_d)-G(\delta,\rho(d))| &\le & |G(\delta_d,\rho_d)-G(\delta,\rho_d)| + |G(\delta,\rho_d)-G(\delta,\rho(d))|\\
&\le& c_1|\delta_d-\delta|+c_2 |\rho_d-\rho(d)| \le \frac{c_3}{N}.
\end{eqnarray*}
Thus, if we write
$$ G(\delta_d,\rho_d)-G(\delta,\rho(d))= \frac{g_2}{N},$$
then $g_2$ is a function which remains in a bounded interval (independent of $d$), and
\begin{equation}\label{combi}
 \frac{1}{N} e^{N G(\delta_d,\rho_d)}=  e^{g_2}\cdot \begin{cases}
\sqrt{N},&\text{if $d$ is even},\\
\sqrt{N}^{-1},&\text{if $d$ is odd}.
\end{cases}
\end{equation}
Combining (\ref{6.0}) and \eqref{combi},  we obtain the remaining assertion.
\end{proof}

Now we deal with the Cover--Efron cones.

\begin{theorem}\label{T6.2}
Let $1/2<\delta<1$ and $0<\rho<\rho_W(\delta) =2-1/\delta$ be given. Let $k<d<N$ be integers (depending on $d$) such that
$$ \frac{d}{N}=:\delta_d\to\delta,\qquad \frac{k}{d}=:\rho_d\to \rho\qquad\mbox{as } d\to\infty.$$
Then
\begin{equation}\label{6.6}
\binom{N}{k}- \bE f_k(C_N) = \frac{1}{N}e^{NG(\delta_d,\rho_d)}\cdot h(d),
\end{equation}
where $h$ remains in a bounded positive interval.

In particular,
$$ \lim_{d\to\infty} \left[\binom{N}{k}-\bE f_k(C_N)\right] =\left\{\begin{array}{ll} 0 & \mbox{if }\rho <\rho_S(\delta),\\[1mm]
\infty & \mbox{if }\rho>\rho_S(\delta).\end{array}\right.$$

If $\rho=\rho_S(\delta)$, then the numbers $N$ and $k$  can be chosen such that
$$ \liminf_{d\to\infty} \left[\binom{N}{k}- \bE f_k(C_N)\right]=0\quad\mbox{and} \quad  \limsup_{d\to\infty} \left[\binom{N}{k}- \bE f_k(C_N)\right]=\infty.$$
\end{theorem}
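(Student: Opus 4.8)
The plan is to reduce everything to Theorem~\ref{T6.1} by expressing $\binom{N}{k}-\bE f_k(C_N)$ through Wendel probabilities and isolating a correction term that turns out to be negligible. Starting from (\ref{2A.3}) and using the identity $P_{m,M}=1-P_{M-m,M}$ (already invoked for (\ref{6.1})), I would write
$$ \binom{N}{k}-\bE f_k(C_N)=\binom{N}{k}\left[1-\frac{P_{d-k,N-k}}{P_{d,N}}\right]=\frac{\binom{N}{k}}{P_{d,N}}\bigl[P_{N-d,N-k}-P_{N-d,N}\bigr].$$
By (\ref{6.1}) the term $\binom{N}{k}P_{N-d,N-k}$ equals $A:=\binom{N}{k}-\bE f_k(D_N)$, whose asymptotics (\ref{6.0}) are supplied by Theorem~\ref{T6.1}. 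Setting $B:=\binom{N}{k}P_{N-d,N}$ gives
$$ \binom{N}{k}-\bE f_k(C_N)=\frac{A}{P_{d,N}}\left(1-\frac{B}{A}\right).$$

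Since $P_{d,N}\to1$ by Proposition~\ref{T2.1} (as $\delta>1/2$), once $B/A\to0$ is shown the representation (\ref{6.6}) follows at once with $h(d)=g(d)\bigl(1-B/A\bigr)/P_{d,N}$: this $h$ is positive for every $d$ (because $\bE f_k(C_N)<\binom{N}{k}$, cf.\ the remark after Theorem~\ref{T3A.1}, so the left-hand side is positive), and its three factors are eventually confined to bounded positive intervals, so $h$ remains in a bounded positive interval. The decisive quantity is therefore the ratio, which by (\ref{2A.0}) is
$$ \frac{B}{A}=\frac{P_{N-d,N}}{P_{N-d,N-k}}=\frac{1}{2^{k}}\cdot\frac{\sum_{i=0}^{N-d-1}\binom{N-1}{i}}{\sum_{i=0}^{N-d-1}\binom{N-k-1}{i}}.$$

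To bound this I would apply Lemma~3 of \cite{HS20} to each sum. The hypothesis $2m<n+1$ holds for the numerator because $\tfrac{N-d-1}{N-1}\to1-\delta<\tfrac12$, and for the denominator because $\tfrac{N-d-1}{N-k-1}\to\frac{1-\delta}{1-\delta\rho}=\sigma<\tfrac12$ --- the very consequence of $\rho<\rho_W(\delta)$ used in Theorem~\ref{T6.1}. Each sum is then at most a factor from a bounded positive interval times its leading term, so
$$ \frac{B}{A}\le \mathrm{const}\cdot\frac{1}{2^{k}}\cdot\frac{\binom{N-1}{N-d-1}}{\binom{N-k-1}{N-d-1}}=\mathrm{const}\cdot\prod_{j=1}^{k}\frac{N-j}{2(d-j+1)}.$$
Because $N>d$, the factors are non-decreasing in $j$, hence each is at most $\frac{N-k}{2(d-k+1)}\to\frac{1-\delta\rho}{2\delta(1-\rho)}$. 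The step I expect to be the main obstacle is verifying that this limit is $<1$; an elementary rearrangement shows $\frac{1-\delta\rho}{2\delta(1-\rho)}<1$ is equivalent to $\rho<2-1/\delta=\rho_W(\delta)$, exactly the standing assumption. Thus there is a fixed $q<1$ with $\frac{N-k}{2(d-k+1)}\le q$ for all large $d$, whence $B/A\le\mathrm{const}\cdot q^{k}\to0$ since $k\to\infty$ (as $\rho>0$). Together with $B/A\ge0$ this gives $B/A\to0$.

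With $B/A\to0$ in hand, (\ref{6.6}) is established, and the three ``in particular'' conclusions follow word for word as in the proof of Theorem~\ref{T6.1}, since (\ref{6.6}) has precisely the same form as (\ref{6.0}); in particular the construction for the critical case $\rho=\rho_S(\delta)$, with $N=\lfloor d/\delta\rfloor$ and $k=\lfloor\rho(d)d\rfloor$, can be carried over unchanged. The entire novelty therefore lies in controlling $B$ relative to $A$, where the hypothesis $\rho<\rho_W(\delta)$ enters in exactly the manner that guarantees $\sigma<1/2$ in Theorem~\ref{T6.1}.
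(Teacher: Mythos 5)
Your proof is correct, and it takes a genuinely different route from the paper's. The paper establishes \eqref{6.6} by two separate bounds: the upper bound comes from observing, via \eqref{2A.2} and \eqref{2A.3}, that $\bE f_k(C_N)\ge\bE f_k(D_N)$, so the Cover--Efron deficit is dominated by the Donoho--Tanner deficit and Theorem \ref{T6.1} applies; the lower bound invokes the exact expression from Lemma 1 of \cite{HS20} for $\binom{N}{k}-\bE f_k(C_N)$, keeps only one term of the double sum, and then runs through the Stirling estimates \eqref{6.3}, \eqref{6.4}, \eqref{6.5}, \eqref{6.5a} a second time. You instead work from the exact identity $\binom{N}{k}-\bE f_k(C_N)=(A-B)/P_{d,N}$ with $A=\binom{N}{k}-\bE f_k(D_N)$ and $B=\binom{N}{k}P_{N-d,N}$, and reduce everything to the single claim $B/A\to 0$, which you verify correctly: the application of Lemma 3 of \cite{HS20} is legitimate on both sums (the numerator's correction factor $\tfrac{d+1}{2d-N+2}$ stays bounded because $\delta>1/2$, the denominator's condition holds because $\rho<\rho_W(\delta)$), the identification of the ratio with $\prod_{j=1}^{k}\tfrac{N-j}{2(d-j+1)}$ is exact, the monotonicity of the factors in $j$ holds since $N\ge d+1$, and your equivalence $\tfrac{1-\delta\rho}{2\delta(1-\rho)}<1\iff\rho<2-1/\delta$ is precisely where the hypothesis $\rho<\rho_W(\delta)$ enters, with $\rho>0$ (hence $k\to\infty$) making $q^k\to 0$. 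What your route buys: both directions of \eqref{6.6} are inherited from Theorem \ref{T6.1} in one stroke, with no need for Lemma 1 of \cite{HS20} and no second pass through Stirling's formula; moreover, since $h(d)=g(d)(1-B/A)/P_{d,N}=g(d)(1+o(1))$, you obtain the sharper conclusion that $\binom{N}{k}-\bE f_k(C_N)$ and $\binom{N}{k}-\bE f_k(D_N)$ are asymptotically equivalent, which the paper's two one-sided bounds with unrelated constants do not show. The paper's argument, in exchange, stays closer to the explicit combinatorial formula for Cover--Efron cones and does not require analyzing a quotient of Wendel probabilities. The treatment of the two limit statements and of the critical case $\rho=\rho_S(\delta)$ is the same in both proofs, carried over verbatim from Theorem \ref{T6.1}, as you note.
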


\begin{proof}
It follows from (\ref{2A.2}) and (\ref{2A.3}) that $\binom{N}{k}-\bE f_k(C_N) < \binom{N}{k}-\bE f_k(D_N)$, hence Theorem \ref{T6.1} yields $\binom{N}{k}-\bE f_k(C_N)<\frac{1}{N}e^{NG(\delta_d,\rho_d)}\cdot c$ with a constant $c$ independent of $d$.

For a corresponding lower bound, we note that Lemma 1 in \cite{HS20} yields that
\begin{align*}
\binom{N}{k}-\bE f_k(C_N)&=\binom{N}{k}\frac{A}{1+A}\\
&=\binom{N}{k}\frac{1}{2^{N-1} P_{d,N}}\sum_{j=1}^{k}\binom{k}{j}\sum_{m=0}^{j-1}\binom{N-k-1}{d-k+m}.
\end{align*}
Using the rough estimates $P_{d,N}\le 1$ and
$$ \sum_{m=0}^{j-1}\binom{N-k-1}{d-k+m}\ge \binom{N-k-1}{d-k},$$
we obtain
\begin{align*}
\binom{N}{k}-\bE f_k(C_N) &\ge 2^{1-N}\binom{N}{k}\sum_{j=1}^k\binom{k}{j}\binom{N-k-1}{d-k}\\
&\ge 2^{k-N}\binom{N}{k}\frac{N-d}{N-k}\binom{N-k}{N-d}.
\end{align*}
We can now use (\ref{6.3}), (\ref{6.4}), (\ref{6.5}), (\ref{6.5a}) to complete the proof of (\ref{6.6}).

The last assertion follows as in the proof of Theorem \ref{T6.1}.
\end{proof}

\bigskip

\noindent
\textbf{Acknowledgement.} 
DH was supported by the German Research Foundation (DFG) via the Priority Program SPP 2265: \textit{Random Geometric Systems}.

\bigskip


\noindent Authors' addresses:\\[2mm]
Daniel Hug\\Karlsruhe Institute of Technology (KIT)\\
Department of Mathematics\\D-76128 Karlsruhe,
Germany\\
E-mail: daniel.hug@kit.edu\\[5mm]
Rolf Schneider\\Mathematisches Institut\\
Albert-Ludwigs-Universit{\"a}t\\
D-79104 Freiburg i.~Br., Germany\\
E-mail: rolf.schneider@math.uni-freiburg.de
\vfill


\begin{thebibliography}{99}

\bibitem{ALMT14} Amelunxen, D., Lotz, M., McCoy, M.B., Tropp, J.A., Living on the edge: phase transitions in convex programs with random data. {\em Inf. Inference} {\bf 3} (2014), 224--294.

\bibitem{CE67} Cover, T.M., Efron, B., Geometrical probability and random points on a hypersphere. {\em Ann. Math. Stat.} {\bf 38} (1967), 213--220.

\bibitem{DGGMR13} Devillers, O., Glisse, M., Goaoc, X., Moroz, G., Reitzner, M., The monotonicity of $f$-vectors of random polytopes. {\em Electron. Commun. Probab.} {\bf 18} (2013), {\em no.} 23, 8 {\em pp.}

\bibitem{DT10} Donoho, D.L., Tanner, J., Counting the faces of randomly-projected hypercubes and orthants, with applications. {\em Discrete Comput. Geom.} {\bf 43} (2010), 522--541.

\bibitem{GKT20} Godland, T., Kabluchko, Z., Th\"ale C., Random cones in high dimensions I: Donoho--Tanner and Cover--Efron cones. arXiv:2012.06189v1

\bibitem{HS16} Hug, D., Schneider, R., Random conical tessellations. {\em Discrete Comput. Geom.} {\bf 56} (2016), 395--462.

\bibitem{HS20} Hug, D., Schneider, R., Threshold phenomena for random cones. {\em Discrete Comput. Geom.} (accepted), arXiv:2004.11473v2

\bibitem{Oka58} Okamoto, M., Some inequalities relating to the partial sum of binomial probabilities.
{\em  Ann.~Inst.~Statist.~Math.} {\bf 10} (1958), 29--35.

\bibitem{SW08} Schneider, R., Weil, W., {\em Stochastic and Integral Geometry.} Springer, Berlin, 2008.

\bibitem{Wen62} Wendel, J.G., A problem in geometric probability. {\em Math. Scand.} {\bf 11} (1962), 109--111.

\end{thebibliography}
\end{document}